\tikzset{>=latex}
\tikzset{neg/.style={
            decoration={markings,
            mark= at position 0.5 with {
                  \node[transform shape] (tempnode) {$\setminus$};
                  }
              },
              postaction={decorate}
}}
\newtheorem{theorem}{Theorem}[section]
\newtheorem{lemma}[theorem]{Lemma}
\newtheorem{proposition}[theorem]{Proposition}
\newtheorem{corollary}[theorem]{Corollary}
\theoremstyle{definition}
\newtheorem{example}[theorem]{Example}
\theoremstyle{remark}
\newtheorem{remark}[theorem]{Remark}
\numberwithin{equation}{section}
\newcommand{\pten}{\ensuremath{\widehat{\otimes}_\pi}}
  \DeclareMathOperator{\diam}{diam\,}
  \newcommand{\ext}[1]{\operatorname{ext}\left(#1\right)}
\renewcommand{\leq}{\leqslant}
\renewcommand{\geq}{\geqslant}
\begin{document}

\subjclass[2010]{Primary 46B04; Secondary 46B20}

\keywords{Diameter two property; convex combination of slices; relatively weakly open; $L_1$-predual}

\title[SD2P and convex combinations of slices reaching the sphere]{Strong diameter two property and convex combinations of slices reaching the unit sphere}

\author[L\'{o}pez-P\'{e}rez]{Gin\'{e}s L\'{o}pez-P\'{e}rez}
\address[G.\ L\'opez-P\'erez]{Universidad de Granada, Facultad de Ciencias.
Departamento de An\'{a}lisis Matem\'{a}tico 18071-Granada
(Spain) and Instituto de Ma\-te\-m\'a\-ti\-cas de la Universidad de Granada (IEMath-GR)} \email{glopezp@ugr.es}
\urladdr{\url{http://wpd.ugr.es/local/glopezp/}}

\author[Mart\'in]{Miguel Mart\'in }
\address[M.\ Mart\'in]{Universidad de Granada, Facultad de Ciencias.
Departamento de An\'{a}lisis Matem\'{a}tico 18071-Granada
(Spain) and Instituto de Ma\-te\-m\'a\-ti\-cas de la Universidad de Granada (IEMath-GR)}
\email{mmartins@ugr.es}
\urladdr{\url{http://www.ugr.es/local/mmartins/}}

\author[Rueda Zoca]{Abraham Rueda Zoca }
\address[A.\ Rueda Zoca]{Universidad de Granada, Facultad de Ciencias.
Departamento de An\'{a}lisis Matem\'{a}tico, 18071-Granada
(Spain)} \email{ abrahamrueda@ugr.es}
\urladdr{\url{https://arzenglish.wordpress.com}}

\thanks{The research of Gin\'{e}s L\'{o}pez-P\'{e}rez and Miguel Mart\'{\i}n has been partially supported by Spanish MINECO/FEDER grant number MTM2015-65020-P, and Junta de Andaluc\'{\i}a/FEDER grant FQM-185. The research of Abraham Rueda Zoca has been supported  by MECD (Spain) FPU2016/00015, Spanish MINECO/FEDER grant number MTM2015-65020-P, and Junta de Andaluc\'{\i}a/FEDER grant FQM-185.}

\date{January 22nd, 2019}

\maketitle

\begin{abstract}
We characterise the class of those Banach spaces in which every convex combination of slices of the unit ball intersects the unit sphere as the class of those spaces in which every convex combination of slices of the unit ball contains two points at distance exactly two. Also, we study when the convex combinations of slices of the unit ball are relatively open or has non-empty relative interior for different topologies, studying the relationship between them and studying these properties for $L_{\infty}$-spaces and preduals of $L_1$-spaces.
\end{abstract}

\section{Introduction}

It is a well-known result in geometry of Banach spaces that every non-empty relatively weakly open subset of the unit ball contains a convex combination of slices of the unit ball (this result is sometimes known as Bourgain's lemma, cf.\  \cite[Lemma II.1]{ggms}, for instance). Although the reverse inclusion does not hold in general (cf.\ \cite[Remark IV.5]{ggms}), it may even happen for some Banach spaces that every convex combination of slices of the unit ball is relatively weakly open. The main result of \cite{al} shows that this is the case of $C(K)$ when the compact space $K$ is scattered. To study this phenomenon, the following properties were introduced in \cite[Section 3]{al}:
\begin{itemize}
\item [(W1)] Every convex combination of slices of the unit ball is weakly relatively open.
\item [(W2)] The relative weak interior of each convex combination of slices of the unit ball is not empty.
\item [(CS)] Every convex combination of slices of the unit ball intersects the unit sphere.
\end{itemize}
These properties had been already studied  implicitly in \cite{ggms}, as fundamental tools to the study of topological properties around the Radon-Nikod\'{y}m property in Banach spaces, as regularity and huskability.

Notice that (W1) implies (W2) which in turn implies (CS) for infinite-dimensional spaces. For finite-dimensional spaces, (CS) never happens while (W2) always does (see Proposition \ref{prop:inteconvexcom}). In \cite[Section 3]{al}, the authors wonder which class of spaces enjoy the above properties and if such spaces have any relation with the diameter two properties.

The main aim of this note is to clarify the relations between the above properties joint with similar properties in the setting of the norm topology and on the weak-star topology, and to show that there are  strong relations with the big slice phenomena, giving an affirmative answer to the question above. 

Before describing the content of the paper, let us introduce the analogous properties of (W1), (W2), and (CS) for the norm topology and the weak-star topology. Given a Banach space $X$, consider the following properties:

\begin{itemize}
    \item [(N1)] Every convex combination of slices of $B_X$ is relatively norm open.
    \item [(N2)] The relative norm interior of each convex combination of slices of $B_X$ is not empty.
\end{itemize}
Note that an analogous norm topology version of (CS) is the same than the weak version, as weak-open slices and norm-open slices are the same.

Additionally, if $X=Y^*$ is a dual Banach space, we define:
\begin{itemize}
    \item [(W$^*$1)] Every convex combination of weak$^*$-slices of $B_{Y^*}$ is relatively weakly-star open.
    \item [(W$^*$2)] The relative weakly-star interior of each convex combination of weak$^*$-slices of $B_{Y^*}$ is not empty.
    \item [(W$^*$-CS)] Every convex combination of weak$^*$-slices of $B_{Y^*}$ intersects $S_{Y^*}$.
\end{itemize}

We pass now to shortly describe the content of the manuscript.

In Section \ref{section:relations}, we study the properties (N1) and (N2) and their relations with the weak versions, clarifying the relation between all these properties. Among other results, we show that (N2) is satisfied by all Banach spaces and that strictly convex spaces always satisfy (N1) but always fail (CS).

The aim of Section \ref{section:relationwithSD2P} is to characterise the property (CS) in terms of a ``diameter two property'' kind condition, which gives solution to some questions in \cite{al}. Indeed, we show that a Banach space $X$ has the strong diameter two property (i.e.\ every convex combination of slices of the unit ball has diameter two) if, and only if, every convex combination of slices of the unit ball $C$ contains points arbitrarily close to the unit sphere of the space. The ideas involving the proof allow us to show that a Banach space $X$ enjoys the property (CS) if, and only if, every convex combination of slices of the unit ball has diameter two and the diameter is attained. We also give an example of a Banach space with the strong diameter two property but failing (CS). Besides, we show that the property (CS) is preserved by taking projective tensor product from both factors but not from only one of them.

Finally, we show in Section \ref{section:Linfty} that the properties (W$^*$1) and (W$^*$2) are equivalent for $L_{\infty}(\mu)$-spaces and that they are indeed equivalent to the fact that the localizable measure $\mu$ is purely atomic. We deduce that if a predual of a $L_1(\mu)$ space has (W2), then the measure $\mu$ has to be purely atomic.

\textbf{Notation:} We will only consider real Banach spaces. Given a Banach space $X$, we denote the closed unit ball (respectively the unit sphere) by $B_X$ (respectively $S_X$). We also denote by $X^*$ the topological dual of $X$. Given two Banach spaces $X$ and $Y$, $L(X,Y)$ stands for space of all bounded linear operators from $X$ to $Y$, and $X\pten Y$ is the projective tensor product of $X$ and $Y$ (see \cite{rya} for a detailed treatment of tensor products). Given a subset $C$ of $X$, $\ext{C}$ stands for the set of extreme points of $C$. By a \textit{slice} of $B_X$ we mean a set of the following form
$$S(B_X,f,\alpha):=\{x\in B_X\colon f(x)>1-\alpha\}$$
where $f\in S_{X^*}$ and $\alpha>0$. If $X=Y^*$ is a dual Banach space and $f$ actually belongs to the predual $Y$ of $X$, then the previous set is called a \textit{weak-star slice}. A \textit{convex combination of slices} of $B_X$ is a set of the following form
$$\sum_{i=1}^n \lambda_i S_i,$$
where $\lambda_1,\ldots, \lambda_n\in [0,1]$ satisfy that $\sum_{i=1}^n \lambda_i=1$ and each $S_i$ is a slice of $B_X$. In the case that $X$ is a dual space, we consider the analogous concept of \textit{convex combination of weak-star slices} of $B_X$.

A Banach space $X$ has the \textit{strong diameter two property} (\emph{SD2P} in short) if every convex combination of slices of the unit ball has diameter two. In the case that $X$ is a dual space, we say that $X$ has the  \textit{weak$^*$-strong diameter two property} (\emph{weak$^*$-SD2P} in short) if every convex combination of weak-star slices of $B_X$ has diameter two. We refer the reader to \cite{aln,blr,blr2} and references therein for background about diameter two properties.

\section{The relation between the norm and the weak topology versions}\label{section:relations}

The following is the general diagram of implications between the properties for the norm and for the weak topology for infinite-dimensional Banach spaces:
\begin{equation}\label{diageneral}
\begin{tikzpicture}[baseline=(m.center)]
  \matrix (m) [matrix of math nodes,row sep=3em,column sep=4em,minimum width=2em]
  {
     \text{(W1)} & \text{(W2)} & \text{(CS)} \\
     \text{(N1)} & \text{(N2)} &  \\};
  \path[-stealth]
    (m-1-1) edge [double] node [right] {(3)} (m-2-1)
            edge [double] node [above] {(1)} (m-1-2)
    (m-1-2) edge [double] node [above] {(2)} (m-1-3)
            edge [double] node [right] {(4)} (m-2-2)
    (m-2-1) edge [double] node [above] {(5)} (m-2-2);
\end{tikzpicture}
\end{equation}

Let us show that none of the reverse implications hold. Indeed, the fact that the reverse implications of (1) and (2) do not hold was proved in \cite[Corollary 2.5 and Corollary 2.9]{hkp} (a counterexample for (1) is $c_0\oplus_1 c_0$ whereas one for (2) is $c_0\oplus_\infty\ell_2$). In order to prove the corresponding statements for the implications (3), (4), and (5), let us begin with the following proposition, from which an easy consequence is that every Banach space satisfies (N2).

\begin{proposition}
\label{prop:inteconvexcom}
Let $X$ be a Banach space and let $C:=\sum_{i=1}^n \lambda_i S_i$ be a convex combination of slices of $B_X$. Then
$$C\cap \operatorname{int}(B_X)=\sum_{i=1}^n \lambda_i (S_i\cap \operatorname{int}(B_X)).$$
In particular, every point of $C\cap \operatorname{int}(B_X)$ is norm-interior to $C$.
\end{proposition}

\begin{proof}
The inclusion $\sum_{i=1}^n \lambda_i (S_i\cap \operatorname{int}(B_X))\subseteq C\cap \operatorname{int}(B_X)$ is clear from an easy convexity argument. In order to prove the reverse inclusion, let $x\in C\cap \operatorname{int}(B_X)$, so $x=\sum_{i=1}^n\lambda_i x_i$ for suitable $x_i\in S_i$ for every $i\in\{1,\ldots, n\}$. Since each $S_i$ is a relative norm-open subset of $B_X$, we can find $\varepsilon>0$ small enough so that $B(x_i,2\varepsilon)\cap B_X\subseteq S_i$ holds for every $i\in\{1,\ldots, n\}$. Define
$$z_i:=(1-\varepsilon)x_i+\varepsilon x,$$
which satisfies that $\sum_{i=1}^n\lambda_i z_i=x$. It remains to prove that, given $i\in\{1,\ldots, n\}$, $z_i\in S_i\cap \operatorname{int}(B_X)$, for which we will prove that $z_i\in B(x_i,2\varepsilon)\cap \operatorname{int}(B_X)$. Given $i\in\{1,\ldots, n\}$, we get that
$$\Vert z_i-x_i\Vert=\Vert \varepsilon(x_i+x)\Vert\leq \varepsilon\Vert x_i+x\Vert\leq 2\varepsilon,$$
which proves that $z_i\in B(x_i,2\varepsilon)$. Moreover,
$$\Vert z_i\Vert\leq (1-\varepsilon)\Vert x_i\Vert+\varepsilon\Vert x\Vert\leq (1-\varepsilon)+\varepsilon\Vert x\Vert<(1-\varepsilon)+\varepsilon=1,$$
where the last inequality is strict because $\Vert x\Vert<1$ by assumption. This proves that $z_i\in \operatorname{int}(B_X)$, which finishes the proof.\end{proof}

An inmediate consequence of the previous proposition is the following corollary.

\begin{corollary}\label{coro:todoN2}
Every Banach space $X$ has the property (N2).
\end{corollary}

In view of the previous corollary, every Banach space $X$ failing (W2) (e.g.\ $C[0,1]$ by \cite[Theorem 3.1]{hkp}) proves that the converse of (4) does not hold.

For the converse of (3), the following proposition provides a large class of counterexamples.

\begin{proposition}\label{prop:strictlyconvex}
Let $X$ be a strictly convex Banach space. Then $X$ satisfies (N1) but fails (CS).
\end{proposition}

\begin{proof} Let us begin by proving that $X$ fails (CS). Consider two disjoint slices $S_1,S_2$ of $B_X$ and $C:=\frac{S_1+S_2}{2}$, and we claim that $C\cap S_X=\emptyset$. Indeed, if there exist $z\in C\cap S_X$, then there exist $x\in S_1$, $y\in S_2$ such that $z=\frac{x+y}{2}$. Since $z\in S_X$ is an extreme point, then $x=y=z$, which is impossible because $S_1$ and $S_2$ were taken to be disjoint. This proves that $C\cap S_X=\emptyset$.

In order to prove that $X$ satisfies (N1), pick a convex combination of slices $C:=\sum_{i=1}^n\lambda_i S_i$ of $B_X$ and $x\in C$, and let us prove that $x$ is an interior point of $C$. Now, we have two possibilities:

\noindent (a). If $\Vert x\Vert<1$, then $x$ is a norm interior point of $C$ by Proposition \ref{prop:inteconvexcom}.

\noindent (b). If $\Vert x\Vert=1$ then, since $x$ is an extreme point, we conclude as before that $x\in \bigcap\limits_{i=1}^n S_i\subseteq C$. This again proves that $x$ is a norm-interior point, since $\bigcap\limits_{i=1}^n S_i$ is a relative norm-open set.
\end{proof}

\begin{remark}
Note that the the first part of the previous proof already appeared in an (unpublished) previous version of this note  \href{https://arxiv.org/abs/1703.04749v1}{arXiv:1703.04749v1}, publicly available at March 2017.
\end{remark}

In order to prove that the converse of (5) in \eqref{diageneral} does not hold, note that an easier reformulation of \cite[Proposition 3.3 (b)]{abhlp} is that if $\ext{B_X}$ is not norm-closed then $X$ fails (W1). The following proposition shows that much more can be said.

\begin{proposition}\label{prop:extre}
Let $X$ be a Banach space. Then:
\begin{itemize}
    \item[(1)] If $X$ has (W1), then $\ext{B_X}$ is weakly closed in $B_X$.
    \item[(2)] If $X$ is infinite-dimensional and has (W2), then $\ext{B_X}$ can not be weakly-dense.
\end{itemize}
\end{proposition}

\begin{proof}
In order to prove (1), consider a net $\{x_s\}$ of extreme points which is weakly convergent to some $x\in B_X$. We claim that $x$ is an extreme point of $B_X$. In fact, assume by contradiction the existence of a pair of points $y,z\in B_X$ such that $x=\frac{y+z}{2}$. By the Hahn-Banach theorem we can find a pair of slices $S_1, S_2$ of $B_X$ satisfying that $y\in S_1, z\in S_2$ and $S_1\cap S_2=\emptyset$. Since $C=\frac{S_1+S_2}{2}$ is weakly open, we can find an index $s$ such that $x_s\in \frac{S_1+S_2}{2}$. Since the slices $S_1$ and $S_2$ are disjoint, there are two different elements $y_s\in S_1, z_s\in S_2$ such that $x_s=\frac{y_s+z_s}{2}$, getting a contradiction with the fact that $x_s$ is an extreme point. Consequently, $x\in \ext{B_X}$, as desired.

For the proof of (2), notice that Proposition \ref{prop:strictlyconvex} implies that $X$ is not stricly convex, so there exists $z\in S_X$ which is not an extreme point. Now, an adaptation of the proof of (1) does the trick.
\end{proof}

Note that similar arguments allow us to derive analogous consequences for the rest of properties.

\begin{proposition}
Let $X$ be a Banach space. Then,
\begin{enumerate}
\item If $X$ has (N1) then $\ext{B_X}$ is norm closed.
\end{enumerate}

Moreover, if $X$ is a dual Banach space, then:
\begin{enumerate}
\item[(2)] If $X$ has (W$^*$1) then $\ext{B_X}$ is weakly-star closed.
\item[(3)] If $X$ has (W$^*$2) then $\ext{B_X}$ is not weakly-star dense in $B_X$.
\item[(4)] If $X$ has (W$^*$-CS) then $X$ is not striclty convex.
\end{enumerate}
\end{proposition}

\begin{example}
Consider $X=C[0,1]$. It is obvious that $\ext{B_X}=\{\pm \mathbf{1}\}$ is norm-compact, but $X$ fails (W2) by \cite[Theorem 3.1]{hkp}. This shows that the converse of (1) and (2) in Lemma \ref{prop:extre} do not hold.
\end{example}

It is well known that in every Banach space $X$ with $\dim(X)\geq 3$, there exists a closed, convex and bounded subsets with a non-empty interior $C$ so that $\ext{B_X}$ is not closed. Since such $C$ can be seen as an equivalent unit ball in the space $X$, we get the following corollary.

\begin{corollary}
Let $X$ be a Banach space such that $\dim(X)\geq 3$. Then there exists an equivalent norm on $X$ failing the property (N1) (and thus failing (W1)).
\end{corollary}

In particular, the previous corollary exhibit a large class of examples which show that the reverse of (5) in (\ref{diageneral}) does not hold.

\section{Characterisation of (CS) and interrelation with the SD2P}\label{section:relationwithSD2P}

In \cite[Section 3]{al} it is stated to be unclear whether there is any connection between having weakly open convex combination of slices and the diameter two properties. The following argument shows that the strong diameter two property is a necessary condition.

\begin{theorem}\label{carasd2p}
Let $X$ be a Banach space. The following assertions are equivalent:
\begin{itemize}
\item[(1)] $X$ has the strong diameter two property.
\item[(2)] For every convex combination of slices $C$ of $B_X$ and every $\varepsilon>0$, there exists $x\in C$ such that $\Vert x\Vert>1-\varepsilon$.
\end{itemize}
\end{theorem}

\begin{proof}
(1)$\Rightarrow$(2) is obvious, so let us prove (2)$\Rightarrow$(1). To this end, pick a convex combination of slices $C:=\sum_{i=1}^n S(B_X,f_i,\alpha)$ of $B_X$ and $\varepsilon>0$. Define $$D:=\frac{1}{2}\left(\sum_{i=1}^n \lambda_i S(B_X,f_i,\alpha)+\sum_{i=1}^n \lambda_i S(B_X,-f_i,\alpha) \right),$$ which is also a convex combination of slices of $B_X$. Choose $x=\frac{1}{2}\left(\sum_{i=1}^n \lambda_i x_i+\sum_{i=1}^n \lambda_i y_i\right)\in D$ with $\Vert x\Vert>1-\varepsilon$. Notice that, by the definition of $D$, we get that $-\sum_{i=1}^n\lambda_i y_i,\sum_{i=1}^n\lambda_i x_i\in C$. Consequently,
$$
\diam(C)\geq \left\Vert \sum_{i=1}^n \lambda_i x_i-\left(-\sum_{i=1}^n \lambda_i y_i\right)\right\Vert=2\Vert x\Vert>2(1-\varepsilon).
$$
Since $\varepsilon>0$ is arbitrary, we get that $\diam(C)=2$.
\end{proof}

Note that the same proof gives a weak-star version of the previous theorem.

\begin{proposition}
Let $X$ be a Banach space. The following assertions are equivalent:
\begin{itemize}
\item[(1)] $X^*$ has the weak$^*$-strong diameter two property.
\item[(2)] For every convex combination of weak$^*$-slices $C$ of $B_{X^*}$ and every $\varepsilon>0$, there exists $x^*\in C$ such that $\Vert x^*\Vert>1-\varepsilon$.
\end{itemize}
\end{proposition}

Theorem \ref{carasd2p} shows that the property (CS) implies the SD2P. The converse, however, is not longer true.

\begin{example}\label{cor}
There exist Banach spaces $X$ with the SD2P failing (CS).
\end{example}

\begin{proof}
An example of a strictly convex space being a non-reflexive M-embedded Banach space (and hence with the SD2P by \cite[Theorem 4.10]{aln}) $X$ is exhibited in \cite[p.~168]{hww}. From Proposition \ref{prop:strictlyconvex}, this Banach space fails (CS).
\end{proof}

In \cite[Question (iii)]{al} it is asked which Banach spaces verify (CS). A slight modification in the proof of Theorem \ref{carasd2p} yields a characterisation of those spaces in terms of the diameter of convex combination of slices.

\begin{theorem}\label{carapropi3}
Let $X$ be a Banach space. The following are equivalent:
\begin{itemize}
\item[(1)] $X$ satisfies the property (CS).
\item[(2)] For every convex combination of slices $C$ of $B_X$ there are $x,y\in X$ such that $\Vert x-y\Vert=2$.
\end{itemize}
\end{theorem}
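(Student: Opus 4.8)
The plan is to mimic the short argument that proves Theorem~\ref{carasd2p}, simply replacing the approximate conclusion ``$\Vert x\Vert>1-\varepsilon$'' by the exact one ``$\Vert x\Vert=1$''. One direction is essentially free and the other is the same trick as before.

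The implication $(2)\Rightarrow(1)$ I would settle immediately by the triangle inequality: if $C$ is a convex combination of slices of $B_X$ and $x,y\in C$ satisfy $\Vert x-y\Vert=2$, then from $\Vert x\Vert\le 1$, $\Vert y\Vert\le 1$ and $2=\Vert x-y\Vert\le\Vert x\Vert+\Vert y\Vert\le 2$ we get $\Vert x\Vert=\Vert y\Vert=1$, so $x\in C\cap S_X$.

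For $(1)\Rightarrow(2)$ I would fix a convex combination of slices $C:=\sum_{i=1}^n\lambda_i S(B_X,f_i,\alpha_i)$ and consider, exactly as in the proof of Theorem~\ref{carasd2p}, the set
$$D:=\frac{1}{2}\left(\sum_{i=1}^n\lambda_i S(B_X,f_i,\alpha_i)+\sum_{i=1}^n\lambda_i S(B_X,-f_i,\alpha_i)\right),$$
which is again a convex combination of slices of $B_X$. By hypothesis $(1)$ there is $z\in D\cap S_X$, say $z=\frac{1}{2}\left(\sum_{i=1}^n\lambda_i x_i+\sum_{i=1}^n\lambda_i y_i\right)$ with $x_i\in S(B_X,f_i,\alpha_i)$ and $y_i\in S(B_X,-f_i,\alpha_i)$. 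Since $f_i(-y_i)=-f_i(y_i)>1-\alpha_i$, one has $-y_i\in S(B_X,f_i,\alpha_i)$, so both $u:=\sum_{i=1}^n\lambda_i x_i$ and $v:=-\sum_{i=1}^n\lambda_i y_i$ lie in $C$, and
$$\Vert u-v\Vert=\left\Vert\sum_{i=1}^n\lambda_i x_i+\sum_{i=1}^n\lambda_i y_i\right\Vert=2\Vert z\Vert=2,$$
which is precisely $(2)$; since $C\subseteq B_X$ this also records that $diam(C)=2$ with the diameter attained.

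I do not expect a genuine obstacle here — the statement is a direct refinement of Theorem~\ref{carasd2p}. The only points that deserve an explicit line are that $D$ is a legitimate convex combination of slices (the $2n$ coefficients $\lambda_i/2$ sum to $1$) and that the reflection $-y_i$ lands back in $S(B_X,f_i,\alpha_i)$; both are routine, so the whole proof should be only a few lines long.
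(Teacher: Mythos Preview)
Your proof is correct and follows essentially the same route as the paper's own argument: both directions are handled exactly as in Theorem~\ref{carasd2p}, with the doubled convex combination $D$ playing the same role and the reflection $y_i\mapsto -y_i$ bringing the second summand back into $C$. Your write-up is in fact slightly more explicit than the paper's (you spell out the triangle-inequality step for $(2)\Rightarrow(1)$ and the reason $-y_i\in S(B_X,f_i,\alpha_i)$), but the underlying idea is identical.
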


\begin{proof}
(2) implies (1) is clear. For (1) implies (2), consider a convex combination of slices of $B_X$ given by $C:=\sum_{i=1}^n S(B_X,f_i,\alpha)$.   Define $$D:=\frac{1}{2}\left(\sum_{i=1}^n \lambda_i S(B_X,f_i,\alpha)+\sum_{i=1}^n \lambda_i S(B_X,-f_i,\alpha \right)),$$ which is also a convex combination of slices of $B_X$. Choose, from the assumption, $$x_0=\frac{1}{2}\left(\sum_{i=1}^n \lambda_i x_i+\sum_{i=1}^n \lambda_i y_i\right)\in D\cap S_X .$$ Now $x:=\sum_{i=1}^n \lambda_i x_i\in C$, $y:=-\sum_{i=1}^n \lambda_i y_i\in C$ and $\|x-y\|=2\Vert x_0\Vert=2$.
\end{proof}

As well as happen with Theorem \ref{carasd2p}, an analogous statement to the previous theorem can be stated for (W$^*$-CS).

\begin{proposition}
Let $X$ be a Banach space. The following are equivalent:
\begin{itemize}
\item[(1)] $X^*$ satisfies the property (W$^*$-CS).
\item[(2)] For every convex combination of weak-star slices $C$ of $B_X$ there are $x^*,y^*\in C$ satisfying that $\Vert x^*-y^*\Vert=2$.
\end{itemize}
\end{proposition}

Let us conclude with some consequences related to preservance of the property (CS) by taking projective tensor products. The next proposition follows similar ideas to the ones of \cite[Theorem 3.5]{blr}.

\begin{proposition}
Let $X$ and $Y$ be two Banach spaces with the property (CS). Then the space $X\pten Y$ also satisfies (CS).
\end{proposition}

\begin{proof}
Consider $C:=\sum_{i=1}^n S(B_{X\pten Y}, T_i,\alpha)$ to be a convex combination of slices of $B_{X\pten Y}$, where $T_i\in (X\pten Y)^*\equiv L(X,Y^*)$ (we refer to \cite[Chapter 2]{rya}), and let us prove that $C\cap S_{X\pten Y}\neq \emptyset$. Indeed, consider $u_i\otimes v_i\in S(B_{X\pten Y}, T_i,\alpha)\cap (S_X\otimes S_Y)$ for all $i\in\{1,\ldots, n\}$. Now
$$u_i\otimes v_i\in S(B_{X\pten Y}, T_i,\alpha)\Leftrightarrow T_i(u_i)(v_i)>1-\alpha\Leftrightarrow u_i\in S(B_X,v_i\circ T_i,\alpha).$$
By assumption there exists an element $\sum_{i=1}^n \lambda_i x_i\in \sum_{i=1}^n \lambda_i S(B_X,v_i\circ T_i,\alpha)$ whose norm is 1. By the Hahn-Banach theorem we can find a functional $x^*\in S_{X^*}$ such that $x^*(x_i)=1$ holds for all $i\in\{1,\ldots, n\}$. It is obvious that $\sum_{i=1}^n \lambda_i x_i\otimes v_i\in C$. Now, by the same procedure we get elements $y_1,\ldots, y_n\in B_Y$ and a functional $y^*\in S_{Y^*}$ such that $y^*(y_i)=1$ holds for every $i\in\{1,\ldots, n\}$ and such that $\sum_{i=1}^n \lambda_i x_i\otimes y_i\in C$. Now
$$\left\Vert \sum_{i=1}^n \lambda_i x_i\otimes y_i\right\Vert\geq \sum_{i=1}^n \lambda_i x^*(x_i)y^*(y_i)=1.$$
Consequently, $C\cap S_{X\pten Y}\neq \emptyset$ as desired.
\end{proof}

\begin{remark}
The assumption of the property on both factors is necessary. In fact, consider $X=\ell_\infty$ and $Y=\ell_p^3$ for some $2<p<\infty$. Note that every convex combination of slices of $B_X$ intersects the unit sphere \cite[Example 3.3]{al}. However, this is not longer true for $X\pten Y$ because such space even fails the strong diameter two property \cite[Corollary 3.9]{llr}, so Theorem \ref{carasd2p} yields the existence of a convex combination of slices $C$ in $B_{X\pten Y}$ and a radius $0<r<1$ such that $C\subseteq r B_{X\pten Y}$.
\end{remark}

\section{The weak-star properties for $L_\infty(\mu)$-spaces}\label{section:Linfty}

Note that \cite[Theorem 3.1]{hkp} proves that, given a compact Hausdorff topological space $K$, then if $C(K)$ has the property (W2) then $K$ admits an atomeless measure. Our aim is to generalise this result to the context of $L_1$-preduals. In order to do so, we will analyse the properties (W$^*$1) and (W$^*$2) in $L_\infty(\mu)$ spaces. More precisely, let $(\Omega,\Sigma,\mu)$ be a localizable measure space. We wonder when $L_\infty(\mu)=L_1(\mu)^*$ satisfies that every convex combination of weak-star slices of $B_{L_\infty(\mu)}$ is a weak-star open subset of $B_{L_\infty(\mu)}$. Let us state the following result, which gives a complete answer to the previous question.

\begin{theorem}\label{teo:caraW*2linfinito}
Let $(\Omega,\Sigma,\mu)$ be a localizable measure space. The following assertions are equivalent:
\begin{enumerate}
\item\label{teo:caraW*2linfinito1} $L_\infty(\mu)$ has (W$^*$1).
\item\label{teo:caraW*2linfinito2} $L_\infty(\mu)$ has (W$^*$2).
\item\label{teo:caraW*2linfinito3} $\mu$ is purely atomic.
\end{enumerate}
\end{theorem}

In order to prove Theorem \ref{teo:caraW*2linfinito} we will need several preliminary results. We will start with a pair of results which will result in the proof of \eqref{teo:caraW*2linfinito2}$\Rightarrow$\eqref{teo:caraW*2linfinito3} in Theorem \ref{teo:caraW*2linfinito}.

\begin{lemma}\label{lemma:condineceLinfty} Let $(\Omega,\Sigma,\mu)$ be a finite measure space. If $\mu$ does not contain any atom, then there exists a convex combination of weak-star slices $C$ of $B_{L_\infty(\mu)}$ which does not contain any weak-star interior point. In other words, if $\mu$ is not purely atomic, then $L_\infty(\mu)$ fails (W$^*$2).
\end{lemma}

\begin{proof}
The proof is an adaptation of that of \cite[Theorem 3.1]{hkp}. We will assume with no loss of generality that $\mu(\Omega)=1$. Since $\mu$ does not contain any atom then we can find three disjoint measurable sets $A,B,C\in \Sigma$ such that $A\cup B\cup C=\Omega$ and such that $\mu(A)=\mu(B)=\mu(C)=\frac{1}{3}$. Using the previous sets we define the following functions
\[\begin{split}
f_1:=\chi_A+\chi_B-\chi_C,\  f_2:=\chi_A-\chi_B-\chi_C.
\end{split}
\]
It is clear that $f_1,f_2\in L_1(\mu)$ are one-norm functions. Pick $0<\varepsilon<\frac{1}{12}$ and define
$$S_1=S(B_{L_\infty(\mu)},f_1,\varepsilon^2),\qquad S_2=S(B_{L_\infty(\mu)},f_2,\varepsilon^2).$$
Define $C:=\frac{S_1+S_2}{2}$. We will prove that $C$ does not have interior points. To this end, we start by giving a necessary condition for an element of $B_{L_\infty(\mu)}$ to belong to $C$. For this we introduce a bit of notation. For a function $u\in B_{L_\infty(\mu)}$, we define the following sets:
$$\begin{array}{ccc}
B_1^u:=\{t\in B:u(t)\leq 1-\varepsilon\}, & B_{-1}^u:=\{t\in B: u(t)\geq -1+\varepsilon\}, &
 B_0^u:=\{t\in B: \vert u(t)\vert\geq \varepsilon\}.
\end{array}$$

\emph{Claim}. If $u\in C$, then $\mu(B_0^u)\leq 2\varepsilon$.\newline
\noindent Indeed, given $u\in C$ then $u=\frac{x+y}{2}$ for suitable $x\in S_1$ and $y\in S_2$. We claim that $\mu(B_1^x)<\varepsilon$. Otherwise if $\mu(B_1^x)\geq \varepsilon$ then we would get
$$1-\varepsilon^2<x(f_1)=\int_\Omega xf_1\ d\mu=\int_A xd\mu+\int_{B_1^x}x d\mu+\int_{B\setminus B_1^x}xd\mu+\int_C xd\mu.$$
Notice that $x(t)\leq 1$ whenever $t\in A\cup (B\setminus B_1^x)\cup C$, whereas $x(t)\leq 1-\varepsilon$ if $t\in B_1^x$. Consequently, the following equalities hold
\[\begin{split}
x(f_1)&=\mu(A)+\mu(B\setminus B_1^x)+\mu(C)+(1-\varepsilon)\mu(B_1^x)\\
& =1-\mu(B_1^x)+(1-\varepsilon)\mu(B_1^x)=1-\varepsilon\mu(B_1^x).
\end{split}
\]
Since $\mu(B_1^x)\geq\varepsilon$ we get that $x(f_1)<1-\varepsilon^2$, which entails a contradiction with the assumption that $x\in S_1$. Consequently $\mu(B_1^x)<\varepsilon$ as desired. Similar computations also proves that $\mu(B_{-1}^y)<\varepsilon$. Moreover, notice that $(B\setminus B_1^x)\cap (B\setminus B_{-1}^y)\subseteq B\setminus B_0^u$ or, equivalently, $B_0^u\subseteq B_1^x\cup B_{-1}^y$. From here the claim easily follows.

Now, using the previous claim we will prove that $C$ does not have any weak-star interior point. Pick $z\in C$, consider a weak-star neighbourhood $\mathcal U$ of $z$ and let us find an element $u\in \mathcal U\setminus C$. Since $\mathcal U$ is weak-star open, we can assume that $\mathcal U$ is of the form
$$
\mathcal U=\bigl\{u\in B_{L_\infty(\mu)}\colon \vert (u-z)(\varphi_i)\vert<\gamma,\ i=1,\ldots, n\bigr\}
$$
for suitable $n\in\mathbb N, \gamma>0$ and $\varphi_1,\ldots, \varphi_n\in S_{L_1(\mu)}$. In order to find an element $u\in \mathcal U\setminus C$, define the sets
$$
E:=A\cup C\cup B_0^z \qquad \text{and} \qquad D:=B\setminus B_0^z=B\setminus E=\Omega\setminus E.
$$
By \cite[Lemma 3.2]{hkp} and by using an application of Hahn decomposition theorem similar to the one of the proof of \cite[Theorem 3.1]{hkp}, we can find two disjoint sets $D_1,D_2\in \Sigma$ such that $D_1\cup D_2=D$ and such that
\begin{equation}\label{desivariamedi}
\left\vert \int_{D_1}f_id\mu-\int_{D_2}f_i d\mu\right\vert<\delta\ \forall i\in\{1,\ldots, n\},
\end{equation}
for $0<\delta<\min\left\{\frac{\gamma}{3(1-\varepsilon)},\frac{1}{6}-2\varepsilon\right\}$. Note that we can assume that $\mu(D_1)>0$ and $\mu(D_2)>0$. Moreover, we can find two sets $\widehat{D_1}\subseteq D_1$ and $\widehat{D_2}\subseteq D_2$ such that $0<\mu(\widehat{D_i})<\delta$ for $i=1,2$. Finally, define $u$ as follows
$$u(t):=\left\{\begin{array}{cc}
z(t)+1-\varepsilon & \mbox{if }t\in D_1\setminus \widehat{D_1},\\
z(t)-1+\varepsilon & \mbox{if }t\in D_2\setminus \widehat{D_2},\\
z(t)& \mbox{otherwise}.
\end{array} \right.$$
Finally, let us show that $u\in \mathcal U\setminus C$. It is clear that $u\in B_{L_\infty(\mu)}$ since $D=B\setminus B_0^z=\{t\in B:\vert z(t)\vert<\varepsilon\}$. Let us prove that $u\in \mathcal{U}$. To this end, fix $i\in\{1,\ldots, n\}$. Then
$$\varphi_i(u-z)=\int_{E\setminus ((D_1\setminus \widehat{D_1})\cup(D_2\setminus \widehat{D_2}))}(u-z)d\mu+\int_{D_1\setminus \widehat{D_1}}(u-z) d\mu+\int_{D_2\setminus \widehat{D_2}}(u-z) d\mu.$$
Note that the first integral is 0 because $u=z$ on the integrating. On the other hand, $u-z\leq 1-\varepsilon$ on $D_1\setminus \widehat{D_1}$ as well as $u-z\geq -1+\varepsilon$ on $D_2\setminus \widehat{D_2}$. Consequently, the remaining two summands can be estimated as follows
\[\begin{split}
\varphi_i(u-z)\leq & (1-\varepsilon)\left(\int_{D_1\setminus \widehat{D_1}}f_i d\mu-\int_{D_2\setminus \widehat{D_2}}f_i d\mu\right)\\
& \leq (1-\varepsilon)\left(\int_{D_1}f_id\mu-\int_{D_2}f_i d\mu-\mu(\widehat{D_1})-\mu(\widehat{D_2})\right)\\
& < 3(1-\varepsilon)\delta<\gamma.
\end{split}\]
Therefore, $u\in \mathcal U$. In order to prove that $u\notin C$, pick $t\in (D_1\setminus \widehat{D_1})\cup (D_2\setminus \widehat{D_2})$ and notice that
$$\vert u(t)\vert\geq 1-\varepsilon-\vert z(t)\vert>2-2\varepsilon>\varepsilon,$$
so $t\in B_0^u$, which proves that $(D_1\setminus \widehat{D_1})\cup (D_2\setminus \widehat{D_2})\subseteq B_0^u$. Consequently, we get
\[\begin{split}
\mu(B_0^u)\geq \mu(D_1)+\mu(D_2)-\mu(\widehat{D_1})-\mu(\widehat{D_2})\geq& \mu(D)-2\delta=\mu(B)-\mu(B_0^z)-2\delta\\
& >\frac{1}{3}-2\varepsilon-2\delta>2\varepsilon
\end{split}
\]
where we have used that $\mu(B_0^z)<2\varepsilon$ since $z\in C$. Consequently $\mu(B_0^u)\geq 2\varepsilon$ and, according to the claim, $u$ does not belong to $C$ as desired.
\end{proof}

Our aim is now to remove the finiteness assumption from the previous lemma. In order to do so, we need the following proposition, which can be seen as a weak-star version of \cite[Proposition 2.7]{hkp}.

\begin{lemma}\label{lemma:sumainfi}
Let $X$ and $Y$ be two Banach spaces and let $Z:=X\oplus_1 Y$. If $Z^*=X^*\oplus_\infty Y^*$ has (W$^*$2), then $X^*$ and $Y^*$ have (W$^*$2).
\end{lemma}

\begin{proof}
The proof will be an adaptation of that of \cite[Proposition 2.7]{hkp}. We will only prove that $X^*$ has (W$^*$2). Let $C:=\sum_{i=1}^n \lambda_i S(B_{X^*}, x_i,\alpha_i)$ be a convex combination of $w^*$-slices of $B_{X^*}$ and let $x^*\in C$. Define
$$D:=\sum_{i=1}^n \lambda_i S(B_{Z^*},(x_i,0),\alpha_i),$$
which is clearly a convex combination of $w^*$-slices of $B_{Z^*}$. Moroever, it is clear that $(z,0)\in D$. Since $Z^*$ has (W$^*$2), it follows that there exists a weak-star open subset $W$ of $B_{Z^*}$ such that $(z,0)\in W\subseteq D$. Since finite-intersections of weak-star slices are basis of the weak-star topology of $B_{Z^*}$ we can assume, with no loss of generality, that
$$W=\bigcap\limits_{i=1}^k S(B_{Z^*}, (a_i,b_i),\beta_i)$$
for suitable $k\in \mathbb N$, $a_i\in X, b_i\in Y$ such that $\Vert a_i\Vert+\Vert b_i\Vert=1$ and $\beta_i>0$ for every $i\in\{1,\ldots, k\}$.
Since $(x^*,0)\in W$ it follows that, given $i\in\{1,\ldots, k\}$, then $1-\beta_i< z^*(a_i)=(z^*,0)(a_i,b_i)\leq \Vert a_i\Vert$. Now, define
$$U:=\bigcap\limits_{i=1}^k \{f\in B_{X^*}: f(a_i)>1-\beta_i\}.$$
It is clear that $U$ is a weak-star open subset of $B_{X^*}$ and that $x^*\in U$. In order to finish the proof let us prove that $U\subseteq C$. To this end, choose $u^*\in U$. From the definition of $U$ and $W$ it follows that $(u^*,0)\in W$. Since $W\subseteq D$ then we can find, for every $i\in\{1,\ldots, n\}$, an element $(a_i^*,b_i^*)\in S(B_{Z^*},(x_i,0),\alpha_i)$ such that
$$(u,0)=\sum_{i=1}^n \lambda_i (a_i,b_i).$$
This means that $u^*=\sum_{i=1}^n \lambda_i a_i^*$. Furthermore, because of the definition of the norm on $Z^*$, it follows that $\Vert a_i^*\Vert\leq 1$. Finally, given $i\in\{1,\ldots, n\}$, we get
$$a_i^*(x_i)=(a_i^*,b_i^*)(x_i,0)>1-\beta$$
because, by assumptions, $(a_i^*,b_i^*)\in S(B_{Z^*},(x_i,0),\alpha_i)$. This proves that $u^*=\sum_{i=1}^n \lambda_i a_i^*\in C$, which in turn implies that $U\subseteq C$ and finishes the proof.
\end{proof}

Now, we are ready to prove the following result.

\begin{proposition}\label{propoLinftynow*2}
Let $(\Omega,\sigma,\mu)$ be a localizable measure space. If $\mu$ is not purely atomic, then $L_\infty(\mu)$ fails the property (W$^*$2).
\end{proposition}

\begin{proof}
Since $\mu$ is not purely atomic, we can find a measurable subset $A\subseteq \Omega$ such that $0<\mu(A)<\infty$ so that $\mu_{|A}$ is a non-atomic measure. Notice that $L_1(\mu)=L_1(\mu_{|A})\oplus_1 L_1(\mu_{|\Omega\setminus A})$ (via the surjective linear isometry $f\longmapsto (f \chi_A,f\chi_{\Omega\setminus A})$). This raises the following decomposition
$$L_\infty(\mu)=L_\infty(\mu_{|A})\oplus_\infty L_\infty(\mu_{|\Omega\setminus A}).$$
Since $\mu_{|A}$ is a finite non-atomic measure, Lemma \ref{lemma:condineceLinfty} implies that $L_\infty(\mu_{|A})$ fails the property (W$^*$2), so $L_\infty(\mu)$ fails the property (W$^*$2) by Lemma \ref{lemma:sumainfi}, as desired.
\end{proof}

In the purely atomic case, the conclusions are dramatically different. The proof of the next result is an adaptation of that of \cite[Theorem 2.3]{al}.

\begin{proposition}\label{prop:condisufilinfty}
Let $I$ be a non-empty set. Then every convex combination of weak-star slices of $B_{\ell_\infty(I)}$ is relatively weak$^*$-open. In other words, $\ell_\infty(I)$ has property (W$^*$1).
\end{proposition}

\begin{proof}  Consider $C:=\sum_{i=1}^n\lambda_i S(B_{\ell_\infty(I)},f_i,\alpha)$, pick $z=\sum_{i=1}^n \lambda_i x_i\in C$ and consider $\delta>0$ such that $$\langle x_i,f_i\rangle>1-\alpha+\delta\bigl(\min\limits_{1\leq j\leq n}\lambda_j\bigr)^{-1}.$$ Since $f_i\in \ell_1(I)$, we can find a finite set $F\subseteq I$ such that $\sum_{t\in I\setminus F}\vert f_i(t)\vert<\frac{\delta}{3}$ for every $i\in\{1,\ldots, n\}$. Moreover, define $$q:=\min\{1-\vert x_i(t)\vert\colon \vert x_i(t)\vert<1,\, t\in F,\, i=1,\ldots, n\}$$ and define
$$\mathcal U:=\{y\in B_{\ell_\infty(I)}\colon \vert y(t)-z(t)\vert<\varepsilon,\, t\in F\},$$
where $\varepsilon<q\left(\min\limits_{1\leq j\leq n}\lambda_j\right)^{-1}$ if $q\neq 0$ and $\varepsilon \left(\min\limits_{1\leq j\leq n}\lambda_j\right)^{-1}<\frac{\delta}{3}$.

It is obvious that $z\in\mathcal U$. In order to prove that $\mathcal U\subseteq C$, consider $y\in\mathcal U$. Our aim is to write $y=\sum_{i=1}^n \lambda_i y_i$ for suitable $y_i\in S(B_{\ell_\infty(I)},f_i,\alpha)$, for which we will follow word-by-word the proof of \cite[Theorem 2.3]{al}. To this end, we will define $y_i$ by coordinates. Pick $t\in I$ and let us discuss by cases:\newline
\noindent (1). If $t\in I\setminus F$, we simply define $y_i(t)=y(t)$ for every $i\in\{1,\ldots, n\}$.\newline
\noindent (2). If $t\in F$ and there exists $i_0\in \{1,\ldots, n\}$ such that $\vert x_{i_0}(t)\vert<1$, define $y_{i_0}(t)=x_{i_0}(t)+\frac{y(t)-z(t)}{\lambda_{i_0}}$ and $y_i(t)=x_i(t)$ for $i\neq i_0$. Note that
$$\sum_{i=1}^n\lambda_i y_i=\sum_{i=1}^n \lambda_i x_i(t)-z(t)+y(t)=y(t).$$
Moreover, because of the choice of $\varepsilon$ in that case, we have that
$$\vert y_{i_0}(t)\vert\leq \vert x_{i_0}(t)\vert+\frac{\varepsilon}{\lambda_{i_0}}<\vert x_{i_0}\vert+1-\vert x_{i_0}\vert=1,$$
so it is clear that $\vert y_i(t)\vert\leq 1 $ holds for every $i\in\{1,\ldots, n\}$. Notice also that
$$\vert y_i(t)-x_i(t)\vert\leq \frac{\varepsilon}{\min\limits_{1\leq j\leq n}\lambda_j}<\frac{\delta}{3}.$$
\noindent (3). If $t\in F$, $\vert x_i(t)\vert=1$, and all the $x_i(t)$ are equal, then one defines $y_i(t)=y(t)$ since, in this case, $x_i(t)=x(t)$ and so $\vert x_i(t)-y_i(t)\vert=0<\frac{\delta}{3}$.\newline
\noindent (4). Finally, if $t\in F$, $\vert x_i(t)\vert=1$ but not all $x_i(t)$ are equal, we define the following sets:
$$A:=\{i\in\{1,\ldots, n\}\colon x_i(t)=1\}\ \mbox{ and }\ B:=\{i\in\{1,\ldots, n\}\colon x_i(t)=-1\}.$$
Note that, by assumptions, $A\cup B=\{1,\ldots, n\}$. Define also $\Lambda_A:=\sum_{i\in A}\lambda_i$ and $\Lambda_B:=\sum_{i\in B}\lambda_i$ and note that $\Lambda_A+\Lambda_B=1$. In order to save notation, for an index $i\in\{1,\ldots, n\}$, define $A_i=A$ if $i\in A$ and $A_i=B$ if $i\in B$. Now, for every $i\in\{1,\ldots, n\}$, we are ready to define $y_i(t)$ as follows
$$y_i(t):=x_i(t)-\delta\frac{\operatorname{sign}(x_i(t))}{2\Lambda_{A_i}}+\frac{(y-x)(t)}{2\Lambda_{A_i}}.$$
Notice that in $\vert y_i(t)\vert\leq 1$. Indeed, the following inequality is clear
$$\vert y_i(t)\vert\leq \vert x_i(t)\vert+\frac{\vert(x-y)(t)\vert-\delta}{2\Lambda_{A_i}}\leq \vert x_i(t)\vert=1.$$
We have that
\begin{align*}
\sum_{i=1}^n\lambda_i y_i(t) &=\sum_{i\in A}\lambda_i x_i+\sum_{i\in B}\lambda_i x_i+(y-x)(t)\left(\frac{\sum_{i\in A}\lambda_i}{2\Lambda_A}+\frac{\sum_{i\in B}\lambda_i}{2\Lambda_B} \right)+ \delta\left(\frac{\sum_{i\in A}\lambda_i}{2\Lambda_A}-\frac{\sum_{i\in B}\lambda_i}{2\Lambda_B} \right)\\ & =\sum_{i=1}^n\lambda_i x_i(t)-z(t)+y(t)=y(t).
\end{align*}
Therefore, in that case we get
$$\vert y_i(t)-x_i(t)\vert\leq \frac{2\delta}{\min\limits_{1\leq j\leq n}\lambda_j}.$$

Summarising, we get that $y=\sum_{i=1}^n\lambda_i y_i$ for suitable $y_i\in B_{\ell_\infty(I)}$ for every $i\in\{1,\ldots, n\}$ satisfying that $\vert y_i(t)-x_i(t)\vert\leq \frac{2\delta}{3}$. Fix $i\in\{1,\ldots, n\}$ and let us show, to finish the proof, that $y_i\in S(B_{\ell_\infty(I)},f_i,\alpha)$. For this, we consider
\begin{align*}
f_i(y_i) &=\sum_{t\in I}f_i(t)y_i(t)=\sum_{t\in F}f_i(t)y_i(t)-\frac{\delta}{3} \\ & =\sum_{t\in F} f_i(t)x_i(t)+\sum_{t\in F} f_i(t)(y_i(t)-x_i(t))-\frac{\delta}{3} \\ & >1-\alpha+\delta
-\frac{2\delta}{3}\Vert f_i\Vert-\frac{\delta}{3}=1-\alpha
\end{align*}
since, from our estimates, $\vert x_i(t)-y_i(t)\vert<\frac{2\delta}{3}$ for every $i\in\{1,\ldots, n\}$ and every $t\in I$. This proves that $y_i\in S(B_{\ell_\infty(I)},f_i,\alpha)$, from where we deduce that $y\in C$ which finishes the proof.\end{proof}

\begin{proof}[Proof of Theorem \ref{teo:caraW*2linfinito}]
\eqref{teo:caraW*2linfinito1}$\Rightarrow$\eqref{teo:caraW*2linfinito2} is obvious, whereas \eqref{teo:caraW*2linfinito2}$\Rightarrow$\eqref{teo:caraW*2linfinito3} is Proposition \ref{propoLinftynow*2} and \eqref{teo:caraW*2linfinito3}$\Rightarrow$\eqref{teo:caraW*2linfinito1} is Proposition \ref{prop:condisufilinfty}.
\end{proof}

In order to get a consequence for $L_1$ preduals we will need the following proposition, which connects (W2) in a Banach space with the property (W$^*$2) in its bidual.

\begin{proposition}\label{subep2}
Let $X$ be a Banach space and assume that every convex combination of slices of $B_X$ has a weakly interior point. Then every convex combination of weak-star slices of $B_{X^{**}}$ contains some weak-star interior point. In other words, if $X$ has (W2), then $X^{**}$ has (W$^*$2).
\end{proposition}

\begin{proof}
Consider $C:=\sum_{i=1}^n\lambda_i S(B_{X^{**}},f_i,\alpha)$ to be a convex combination of weak-star slices in $B_{X^{**}}$. Pick $0<\delta<\alpha$ and define $D:=\sum_{i=1}^n \lambda_i S(B_X,f_i,\delta)$. By the assumption, we can find $x\in D$ and a weakly-star open subset $O$ of $X^{**}$ such that
$$x\in O\cap B_X\subseteq D.$$
Then
\begin{align*}
x\in O\cap B_{X^{**}} &\subseteq \overline{O\cap B_X}^{w^*}\subseteq \overline{D}^{w^*} =\sum_{i=1}^n\lambda_i \overline{S(B_{X^{**}},f_i,\delta)}^{w^*}\\
& =\sum_{i=1}^n \lambda_i \{x^{**}\in B_{X^{**}}:x^{**}(f_i)\geq 1-\delta\} \subseteq C,
\end{align*}
so $x\in C$ is a weakly-star interior point, as desired.
\end{proof}

In \cite[Theorem 3.1]{hkp} it is proved that $C(K)$ contains a convex combination of slices without any weak interior point whenever $K$ admits an atomeless measure. Note that this result can be seen as a part of the following more general result whose proof is an straightforward application of Proposition \ref{subep2} and Theorem \ref{teo:caraW*2linfinito}.

\begin{theorem}\label{condinecepredul1}
Let $X$ be a predual of $L_1$, that is, $X^*=L_1(\mu)$. If every convex combination of slices of $B_X$ contains some weak interior point then $\mu$ is purely atomic. In other words, if $X$ has (W2), then $\mu$ is purely atomic.
\end{theorem}

Let us end with a brief discussion about the weak and weak-star versions of the properties in dual Banach spaces. In general, the following diagram holds:
\begin{equation}\label{dia-weakstar}
\begin{tikzpicture}[baseline=(m.center)]
  \matrix (m) [matrix of math nodes,row sep=3em,column sep=4em,minimum width=2em]
  {
     \text{(W$^*$1)} & \text{(W$^*$2)} & \text{(W$^*$-CS)} \\
     \text{(W1)} & \text{(W2)} &  \text{(CS)}\\};
  \path[-stealth]
    (m-1-1) edge [double] node [above] {(1)} (m-1-2)
    (m-1-2) edge [double] node [above] {(2)} (m-1-3)
    (m-2-1) edge [double]  (m-2-2)
    (m-2-2) edge [double]  (m-2-3)
    (m-2-3) edge [double] node [right] {(3)} (m-1-3)
    (m-1-1) edge [double,neg] node [above] {(4)} (m-2-2)
    ;
\end{tikzpicture}
\end{equation}
The implications (1), (2), and (3) are obvious. Let us give an example showing that (W$^*$1) does not imply (W2) (this is (4)), and so showing that (W$^*$1) does not imply (W1) and (W$^*$2) does not imply (W2).

\begin{example}
$X=\ell_\infty$ has (W$^*$1) by Proposition \ref{prop:condisufilinfty}. However, from the identification $\ell_\infty=C(\beta\mathbb N)$, we deduce that $X$ fails $(W2)$ since $\beta\mathbb N$ is not scattered and we may use \cite[Remark 3.1]{hkp}.
\end{example}

Let us now present some examples showing that the implications (1), (2), and (3) in the diagram \eqref{dia-weakstar} do not reverse.

\begin{example} Let us consider the following examples.
\begin{itemize}
\item[(a)] $\ell_\infty\oplus_1\ell_\infty$ fails (W$^*$1) by a weak star version of \cite[Proposition 2.1]{hkp}. However, $c_0\oplus_1 c_0$ has (W2) by using \cite[Theorem 2.4]{al} and \cite[Proposition 2.4]{hkp}. Hence, $(c_0\oplus_1 c_0)^{**}=\ell_\infty\oplus_1 \ell_\infty$ has (W$^*$2) by Proposition \ref{subep2}. This shows that the reverse implication to (1) does not hold.
\item[(b)] $X=L_\infty[0,1]$ as dual of $L_1[0,1]$ has (W$^*$-CS) as it is the dual of a Banach space with the Daugavet property and we may use \cite[Example 3.4]{al}. However, $L_\infty[0,1]$ fails (W$^*$2) by Theorem \ref{teo:caraW*2linfinito}. This shows that (2) does not reverses.
\item[(c)] Let $X=L_1[0,1]^{**}$. Then $X$ fails (CS) since $B_X$ has strongly exposed points. However, $X$ has (W$^*$-CS) as it is the dual of a Banach space with the Daugavet property, $L_\infty[0,1]$, and we may use \cite[Example 3.4]{al}. This shows that the reverse implication to (3) does not hold.
\end{itemize}
\end{example}

\end{document}